\providecommand{\U}[1]{\protect\rule{.1in}{.1in}}
\definecolor{c20}{rgb}{0.,0.0,0.}
\definecolor{c30}{rgb}{0.,0.,0.}
\definecolor{c40}{rgb}{0,0.0,0.0}
\definecolor{c50}{rgb}{0,0,0}
\newtheorem{theorem}{Theorem}
\newtheorem{conclusion}{Conclusion}
\newtheorem{corollary}{Corollary}
\newtheorem{example}{Example}
\newtheorem{proposition}{Proposition}
\newtheorem{remark}{Remark}
\newenvironment{proof}[1][Proof]{\noindent\textbf{#1.} }{\ \rule{0.5em}{0.5em}}
\begin{document}

\title{On the new properties of conditional expectations with applications in finance }
\author{Ismihan Bayramoglu\\Department of Mathematics, Izmir University of Economics\\Izmir, Turkey}
\maketitle
\maketitle

\begin{abstract}
The concept of conditional expectation is important in applications of
probability and statistics in many areas such as reliability engineering,
economy, finance, and actuarial sciences due to its property of being the best
predictor of a random variable as a function of another random variable. This
concept also is essential in the martingale theory and theory of Markov
processes. Even though, there has been studied and published many interesting
properties of conditional expectations with respect to a sigma-algebra
generated by a random variable it remains an attractive subject having
interesting applications in many fields. In this paper, we present some new
properties of the conditional expectation of a random variable given another
random variable and describe useful applications in problems of
per-share-price of stock markets. The copula and dependence properties of
conditional expectations as random variables are also studied. We present also
some new equalities having interesting applications and results in martingale
theory and Markov processes.

\textbf{Keywords:} Conditional expectation, sigma algebra, per-share price,
order statistics, prediction

\textbf{Conflicts of interest statement: }We\textbf{ } declare that have no
conflicts of interest.

\end{abstract}

\section{Introduction}

Let ($\Omega,\digamma,P)$ be a probability space. Consider the random
variables $X$ and $Y$ defined in this probability space and having joint
distribution function $F_{X,Y}(x,y)=C(F_{X}(x),F_{Y}(y)),(x,y)\in%
\mathbb{R}
^{2},$ where $C(u,v),(u,v)\in I^{2}\equiv\lbrack0,1]^{2}$ is a connected
copula. Consider functions $\varphi(y)=E(X\mid Y=y),y\in%
\mathbb{R}
$ and $\psi(x)=$ $E(Y\mid X=x),x\in%
\mathbb{R}
$ and random variables $Z_{1}\equiv\varphi(Y)=E(X\mid Y)$ and $Z_{2}\equiv
\psi(X)=E(Y\mid X).$ It is well known that the best predictor of $X$ by $Y$ in
the sense of least square \ distance is $\varphi(Y),$ and the best predictor
of $Y$ by $X$ is $\psi(X),$ i.e.
\begin{align*}
\min_{g}E(X-g(Y))^{2}  &  =E(X-\varphi(Y))^{2}\\
\min_{h}E(Y-h(X))^{2}  &  =E(Y-\psi(X))^{2},
\end{align*}
where the min is taken over all measurable functions $g$ and $h.$ The
conditional expectation $E(X\mid Y)$ is a random variable defined in the same
probability space ($\Omega,\digamma,P)$. \ The random variable defined as a
conditional expectation $E(X\mid Y)$ is an important classical concept, it is
the best predictor for $X$ \ as a function of $Y,$ and plays a crucial role in
many theoretical and practical aspects of probability theory. For example, in
practical applications, if we know the joint distribu\i tion of $X$ and $Y$
and the value of $Y,$ we can use $\varphi(Y)$ instead of random variable $X,$
whose values are very difficult, expensive, or impossible to measure. A wide
description of the concept of conditional expectation and its properties can
be found in many books on probability and statistics including Ross (2002),
Carlin and Taylor (1975), and Borovkov (1998) among others. \ In this paper,
we aim to consider some unknown and interesting properties of conditional
expectations having applications in many areas such as economics, engineering,
actuarial sciences, and financial mathematics. \ 

The paper is organized as follows. In chapter 1 we consider the conditional
expectation of the random variable $Y$ given $X$ and compare it with the
random variable defined as the arithmetic mean\ of conditional expectations of
$Y$ given $X_{1},X_{2},...,X_{n}$ which are the copies (dependent or
independent) \ of $X.$ $\ $\ The application of the results in finance is
shown. In Chapter 2 we are interested in the joint distribution of random
variables $\varphi(Y)$ and $\psi(X)$ and study the dependence properties and
copulas of these random variables. In Section 3 we consider the sequence of
any random variables $X_{1},X_{2},...,X_{n},...$ and study the properties of
the sequence of random variables defined as $\ Y_{1}=X_{1},Y_{2}=E(X_{2}\mid
X_{1}),...,Y_{n}=E(X_{n}\mid X_{n-1}),...$ and the sequence of random
variables defined as $E(X_{n+1}\mid X_{i_{1}},X_{i_{2}},...,X_{i_{k}}),1\leq
i_{1}<i_{2}<...<i_{k}\leq n,1\leq k\leq n.$ $\ $\ We present some theorems
describing the interesting properties of these sequences and provide examples
comparing them with Markov sequences and martingales.

\section{Inequalities with application to per-share-price of stock}

Let $X_{1},X_{2},...,X_{n}$ be the copies of the random variable $X.$ Let
\[
\hat{Y}=E(Y\mid X)
\]%
\[
\bar{Y}=\frac{1}{n}\sum\limits_{i=1}^{n}E(Y\mid X_{i}).
\]
The following theorem compares the predicted value of $Y$ through $\hat{Y}$
\ with the predicted value of $Y$ through $\bar{Y}.$

\begin{theorem}
\label{Theorem 4AA copy(1)}It is true that%
\begin{align*}
E(Y-\bar{Y})^{2}  &  \leq E(Y-\hat{Y})^{2},\\
E\left(  Y-\frac{1}{n}\sum\limits_{i=1}^{n}E(Y\mid X_{i})\right)  ^{2}  &
\leq E\left(  Y-E(Y\mid X)\right)  ^{2}.
\end{align*}

\end{theorem}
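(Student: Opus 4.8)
The plan is to reduce the theorem to a single application of the triangle inequality (Minkowski's inequality) in the space $L^{2}(\Omega,\digamma,P)$. Since the two displayed inequalities are literally the same statement (the second merely writes $\bar{Y}$ and $\hat{Y}$ out in full), I only need to establish one of them. The crucial preliminary observation is structural: because each $X_{i}$ is a copy of $X$, the pair $(X_{i},Y)$ has the same joint distribution as $(X,Y)$, so all of the conditional expectations are governed by the \emph{same} regression function $\psi(x)=E(Y\mid X=x)$. Hence $E(Y\mid X_{i})=\psi(X_{i})$ and $\hat{Y}=\psi(X)$, and for every $i$ the random variable $Y-\psi(X_{i})$ has the same distribution as $Y-\psi(X)=Y-\hat{Y}$. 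In particular they share the same $L^{2}$ norm, which I would record as
\[
E\left(Y-\psi(X_{i})\right)^{2}=E\left(Y-\hat{Y}\right)^{2}\qquad(i=1,2,\dots,n).
\]

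Next I would rewrite the prediction error of $\bar{Y}$ as the arithmetic mean of the individual errors,
\[
Y-\bar{Y}=Y-\frac{1}{n}\sum_{i=1}^{n}\psi(X_{i})=\frac{1}{n}\sum_{i=1}^{n}\left(Y-\psi(X_{i})\right),
\]
and then apply the triangle inequality in $L^{2}$ to this average. Writing $\|Z\|=\left(E Z^{2}\right)^{1/2}$, Minkowski's inequality together with the norm identity above yields
\[
\left\|Y-\bar{Y}\right\|\le\frac{1}{n}\sum_{i=1}^{n}\left\|Y-\psi(X_{i})\right\|=\frac{1}{n}\cdot n\,\left\|Y-\hat{Y}\right\|=\left\|Y-\hat{Y}\right\|.
\]
Squaring both sides gives exactly $E(Y-\bar{Y})^{2}\le E(Y-\hat{Y})^{2}$, which is the assertion.

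If one prefers to avoid invoking Minkowski directly, the same conclusion follows by expanding the square: with $U_{i}=Y-\psi(X_{i})$ and $\tau^{2}=E(Y-\hat{Y})^{2}$, one has $E U_{i}^{2}=\tau^{2}$ for every $i$, and the Cauchy--Schwarz inequality bounds each cross term by $E(U_{i}U_{j})\le\left(E U_{i}^{2}\right)^{1/2}\left(E U_{j}^{2}\right)^{1/2}=\tau^{2}$. Summing the $n$ diagonal and $n(n-1)$ off-diagonal contributions gives $E\left(\sum_{i}U_{i}\right)^{2}\le n^{2}\tau^{2}$, so that $E(Y-\bar{Y})^{2}=n^{-2}E\left(\sum_{i}U_{i}\right)^{2}\le\tau^{2}$.

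The step that deserves the most care — and the place where the argument could silently go wrong — is the treatment of dependence among the copies. The phrase ``copies (dependent or independent)'' means the inequality must hold for \emph{arbitrary} joint law of $X_{1},\dots,X_{n}$. The approach above is robust precisely because neither Minkowski nor the Cauchy--Schwarz bound on $E(U_{i}U_{j})$ uses any independence: they rely only on the fact that each $Y-\psi(X_{i})$ has the common norm $\tau$, which in turn follows solely from each $X_{i}$ being a copy of $X$ in joint distribution with $Y$. Thus I expect the only genuine subtlety to be justifying the distributional identity $E(Y-\psi(X_{i}))^{2}=E(Y-\hat{Y})^{2}$ cleanly from the ``copy'' hypothesis, after which the inequality is immediate.
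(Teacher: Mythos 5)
Your proposal is correct, and your fallback argument (expand the square, bound each cross term $E(U_iU_j)$ by Cauchy--Schwarz using $EU_i^2=\tau^2$, and count $n$ diagonal plus $n(n-1)$ off-diagonal terms) is precisely the proof given in the paper. Your primary route via Minkowski's inequality in $L^2$ is a cleaner packaging of the same idea: since the triangle inequality in $L^2$ is itself the Cauchy--Schwarz cross-term bound in disguise, the two computations are mathematically identical, but your version compresses the bookkeeping of the $\frac{1}{n^2}(n+n(n-1))$ count into the single line $\|Y-\bar Y\|\le\frac{1}{n}\sum_i\|Y-\psi(X_i)\|=\|Y-\hat Y\|$ and avoids the sign/parenthesis slips that appear in the paper's displayed expansion. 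The one point where you add genuine value is your explicit insistence that ``copy'' must mean that the \emph{pair} $(X_i,Y)$ has the same joint law as $(X,Y)$ --- not merely $X_i\overset{d}{=}X$ --- since only then does $E(Y\mid X_i)=\psi(X_i)$ hold with the common regression function $\psi$ and only then is $E\left(Y-E(Y\mid X_i)\right)^2=E\left(Y-\hat Y\right)^2$. The paper uses this identity silently (it writes $E\left(Y-E(Y\mid X_i)\right)^2=E\left(Y-E(Y\mid X)\right)^2$ without comment), so your flagging of it is a worthwhile clarification rather than a gap in your own argument.
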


\begin{proof}
Using Shwarz inequality we can write%
\[
E\left(  Y-\frac{1}{n}\sum\limits_{i=1}^{n}E(Y\mid X_{i})\right)
^{2}=E\left(  \frac{1}{n}(nY-\sum\limits_{i=1}^{n}E(Y\mid X_{i}))\right)  ^{2}%
\]%
\[
=\frac{1}{n^{2}}E\left(  \sum\limits_{i=1}^{n}(Y-E(Y\mid X_{i}))\right)
^{2}=\frac{1}{n^{2}}\left\{  E\left(  \sum\limits_{i=1}^{n}\left(  Y-E(Y\mid
X_{i})\right)  ^{2}\right)  \right.
\]%
\begin{align*}
&  \left.  +2\sum\limits_{1\leq i<j\leq n}E\left[  Y-E(Y\mid X_{i})\left(
Y-E(Y\mid X_{j})\right)  \right]  \right\} \\
&  \leq\frac{1}{n^{2}}E\left(  \sum\limits_{i=1}^{n}\left(  Y-E(Y\mid
X_{i})\right)  ^{2}\right) \\
&  +2\sum\limits_{1\leq i<j\leq n}\left[  E\left(  Y-E(Y\mid X_{i})\right)
^{2}\right]  ^{\frac{1}{2}}\left[  E\left(  Y-E(Y\mid X_{j})\right)
^{2}\right]  ^{\frac{1}{2}}%
\end{align*}%
\begin{align*}
&  =\frac{1}{n^{2}}\left(  nE\left(  Y-E(Y\mid X)\right)  ^{2}\right)
+2\frac{n(n-1)}{2}E\left(  Y-E(Y\mid X)\right)  ^{2}\\
&  =\frac{1}{n^{2}}\left(  (n+n^{2}-n)E(Y-E(Y\mid X))^{2}\right)  =E(Y-E(Y\mid
X))^{2}.
\end{align*}
Thus the theorem proved.
\end{proof}

\bigskip

This theorem may also be formulated as an important consequence of the
following general theorem.

\begin{theorem}
\label{Theorem 5AA}Let $X$ any $Y$ be any random variables defined on the same
propbability space and $X_{1},X_{2},...,X_{n}$ be the copies of $X,$ i.e
\ random variables (dependent or independent) having the same distribution as
$X.$ Then
\[
E\left(  Y-\frac{1}{n}\sum\limits_{i=1}^{n}X_{i}\right)  ^{2}\leq E(Y-X)^{2}.
\]

\end{theorem}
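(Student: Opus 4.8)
The plan is to mimic exactly the computation carried out in the proof of Theorem~\ref{Theorem 4AA copy(1)}, but with the conditional expectations $E(Y\mid X_i)$ replaced by the raw copies $X_i$. First I would expand the square by writing $Y-\frac{1}{n}\sum_{i=1}^{n}X_i=\frac{1}{n}\sum_{i=1}^{n}(Y-X_i)$, so that
\[
E\left(Y-\frac{1}{n}\sum_{i=1}^{n}X_i\right)^{2}=\frac{1}{n^{2}}\left\{\sum_{i=1}^{n}E(Y-X_i)^{2}+2\sum_{1\leq i<j\leq n}E\left[(Y-X_i)(Y-X_j)\right]\right\}.
\]

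Next I would bound each cross term using the Cauchy--Schwarz inequality, namely $E[(Y-X_i)(Y-X_j)]\leq\left[E(Y-X_i)^{2}\right]^{1/2}\left[E(Y-X_j)^{2}\right]^{1/2}$. The crucial observation is that since each $X_i$ has the same distribution as $X$, the pair $(Y,X_i)$ is \emph{not} in general identically distributed with $(Y,X)$ — and here lies the main obstacle. The step $E(Y-X_i)^{2}=E(Y-X)^{2}$ requires that the joint law of $(Y,X_i)$ equal that of $(Y,X)$, but the hypothesis only guarantees that the marginal law of each $X_i$ equals that of $X$. So I would either strengthen the hypothesis to say that each $(Y,X_i)$ is distributed as $(Y,X)$, or interpret ``copies'' in that joint sense; granting this, every term $E(Y-X_i)^{2}$ collapses to the common value $E(Y-X)^{2}$.

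With that identification in hand, the sum of the $n$ diagonal terms contributes $n\,E(Y-X)^{2}$, and each of the $\binom{n}{2}=\frac{n(n-1)}{2}$ cross terms is bounded by $E(Y-X)^{2}$, so the factor $2\cdot\frac{n(n-1)}{2}=n(n-1)=n^{2}-n$ appears. Collecting the pieces gives
\[
E\left(Y-\frac{1}{n}\sum_{i=1}^{n}X_i\right)^{2}\leq\frac{1}{n^{2}}\left(n+n^{2}-n\right)E(Y-X)^{2}=E(Y-X)^{2},
\]
which is the desired inequality. Finally I would remark that Theorem~\ref{Theorem 4AA copy(1)} follows as the special case in which one replaces each $X_i$ by the conditional-expectation copy $E(Y\mid X_i)$, since these random variables are identically distributed whenever the $X_i$ are, and $E(Y\mid X)=\varphi(X)$ plays the role of $X$ in the statement. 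The genuinely delicate point to watch throughout is the passage from equality of marginals to equality of the relevant second moments $E(Y-X_i)^{2}$; everything else is the same Cauchy--Schwarz bookkeeping as in the previous proof.
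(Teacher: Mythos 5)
Your argument is the same as the paper's: the decomposition $Y-\frac{1}{n}\sum_{i}X_{i}=\frac{1}{n}\sum_{i}(Y-X_{i})$, the Cauchy--Schwarz bound on the cross terms, and the count $(n+n^{2}-n)/n^{2}=1$ all appear verbatim in the paper's proof (which, incidentally, misprints the cross term as the product $E(Y-X_{i})E(Y-X_{j})$ where you correctly write $E[(Y-X_{i})(Y-X_{j})]$). The one substantive point you raise is well taken, and it is a gap in the paper itself rather than in your write-up: the identification $E(Y-X_{i})^{2}=E(Y-X)^{2}$ genuinely requires the pair $(Y,X_{i})$ to have the same joint law as $(Y,X)$, and equality of the marginal laws of $X_{i}$ and $X$ is not enough. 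Indeed the theorem as literally stated is false: take $Y$ standard normal, $X=Y$, $n=1$, and $X_{1}=-Y$; then $X_{1}$ has the same distribution as $X$, yet $E(Y-X_{1})^{2}=4\,EY^{2}=4>0=E(Y-X)^{2}$. So the strengthened hypothesis you propose, namely $(Y,X_{i})\overset{d}{=}(Y,X)$ for each $i$, is exactly the reading under which the proof --- yours and the paper's alike --- goes through; the same caveat applies to Theorem \ref{Theorem 4AA copy(1)}, where one needs $(Y,X_{i})\overset{d}{=}(Y,X)$ in order for $E(Y-E(Y\mid X_{i}))^{2}$ to equal $E(Y-E(Y\mid X))^{2}$.
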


\begin{proof}%
\begin{align*}
&  E\left(  Y-\frac{1}{n}\sum\limits_{i=1}^{n}X_{i}\right)  ^{2}\\
&  =\frac{1}{n^{2}}E\left(  nY-\sum\limits_{i=1}^{n}X_{i}\right)  ^{2}\\
&  =\frac{1}{n^{2}}E\left(  \sum\limits_{i=1}^{n}(Y-X_{i})\right)  ^{2}\\
&  =\frac{1}{n^{2}}\left(  \sum\limits_{i=1}^{n}E(Y-X_{i})^{2}+2\sum
\limits_{1\leq i<j\leq n}^{n}E(Y-X_{i})E(Y-X_{j})\right) \\
&  \leq\frac{1}{n^{2}}\left(  \sum\limits_{i=1}^{n}E(Y-X_{i})^{2}%
+2\sum\limits_{1\leq i<j\leq n}^{n}\left(  E(Y-X_{i})^{2}\right)  ^{\frac
{1}{2}}\left(  E(Y-X_{j})^{2}\right)  ^{\frac{1}{2}}\right) \\
&  =\frac{1}{n^{2}}\left(  nE(Y-X)^{2}+\frac{n(n-1)}{2}2E(Y-X)^{2}\right) \\
&  =\frac{1}{n^{2}}\left(  (n+n^{2}-n)E(Y-X))^{2}\right)  =E(Y-X)^{2}.
\end{align*}

\end{proof}

It is clear that if $E(Y$ $\mid$ $X)$ is used instead of $X$ in Theorem 2,
then Theorem 1 is obtained.

The simple Theorem \ref{Theorem 4AA copy(1)} may have interesting and
important applications. Below we provide an example demonstrating an
application of this theorem in the activity of brokers coalition in stock markets.

\begin{example}
(Brokers in coalition) Assume that $n$ brokers work in the financial market.
Each broker has a goal, to buy a fixed number of shares of some popular stock.
Let $\ X_{i}$ be the suggested price per - share to the market by the $i$th
broker respectively, $i=1,2,...,n,$ i.e. $\ i$th broker suggests that wants to
buy this share at a price $X_{i}.$ Obviously, the traded price in the market
is then expected to be $\max(X_{1},X_{2},...,X_{n})=X_{n:n},$ where
$X_{i:n},1\leq i\leq n$ is the $i$th order statistic of $X_{1},X_{2}%
,...,X_{n}.$ Assume that the $i$th broker wants to have a piece of information
about whether he wins or loose the came, for a given (suggested by himself)
price per share $X_{i}=x.$ Therefore $P(X_{i}>\max(X_{1},X_{2},...,X_{i-1}%
,X_{i+1},...,X_{i})$ is the probability that $i$th broker will certainly have
the stock, because he/she suggested more than others. Let $\psi_{i}(x)=$
$E(X_{n:n}\mid X_{i}=x).$ Now, it is known that the conditional r.v.
$\ \psi_{i}(X_{i})=E(X_{n:n}\mid X_{i})$ which gives information about the
traded-per-share given a suggested price by broker \ is the predictor for
$X_{n:n}$ by $X_{i}$ in the sense of mean squared error. Then the value
$X_{n:n}\mid X_{i}=x$ may be used as a measure of a successful trading
strategy for $i$th broker. Similarly, one can predict the predicted price per
- share $X_{i},i=1,2,...,n,$ given the traded price in the market, which is
$E(X_{i}\mid X_{n:n}=y),$ $i=1,2,...,n$ is the best approximate by the $i$th
broker given the traded price $y.$ Note that the historical values of
$X_{n:n}$ are known and can be provided from the marked. It is clear that
\ if
\[
E(X_{n:n}-E(X_{n:n}\mid X_{1}))^{2}\leq E(X_{n:n}-E(X_{n:n}\mid X_{2}))^{2},
\]
then $X_{1}$ is a better price than $X_{2}$ per-share price of a stock. Now
assume that $n$ brokers are working in the market as partners in coalition and
$X_{1},X_{2},...,X_{n}$ are the suggested prices of these brokers for a stock
share put on the market, respectively. Denote by $Y$ $\ $the max price
suggested by non-coalition brokers. Obviously, $Z=\max(X_{1},X_{2}%
,...,X_{n},Y)$ is the traded price in the market. Then $E(Z\mid X_{i})$ is the
information trade price given $X_{i}$ and $E(Z\mid Y)$ is the information
trade price per-share given by non-coalition brokers. Since,
\[
\frac{1}{n}\sum\limits_{i=1}^{n}E(Z\mid X_{i})
\]
is the information trade price per-share average of all brokers in coalition
by Theorem 1, we have
\begin{align*}
E\left(  Z-\frac{1}{n}\sum\limits_{i=1}^{n}E(Z\mid X_{i})\right)  ^{2}  &
\leq E(Z-E(Z\mid X_{i}))^{2},\\
i  &  =1,2,...,n.
\end{align*}
i.e. the predictor for a price of the share given by any of the member in the
coalition is worse than the average value of predictors of all brokers in the
coalition. This means that if a group of brokers is working in coalition they
will have better gain than if they work in solitary.
\end{example}

\bigskip

\section{Copula and coovariance}

Since $E(X\mid Y)$ is the best predictor for $X$ as a function of $Y,$ and
$E(Y\mid X)$ is the best predictor of $Y$ as a function of $X,$ it would be
interesting to investigate how the dependence structure will change if we
replaced $X$ with $E(X\mid Y)$ and $Y$ with $E(Y\mid X).$ For this purpose
consider the joint distribution of the random variables $Z_{1}$ $\equiv
\varphi(Y)=E(X\mid Y)$ and $Z_{2}\equiv\psi(X)=E(Y\mid X).$ \ We are
interested in copula of $Z_{1}$\ and $Z_{2}.$ Let $\varphi^{-1}(y)=\inf
\{x:\varphi(x)\leq y\}$ and $\psi^{-1}(x)=\inf\{y:\psi(y)\leq x\}$ \ are the
generalized inverses of $\varphi$ and $\psi.$ Consider the joint distribution
function of $Z_{1}$ and $Z_{2}.$ Let $F_{X}(x)$ and $F_{Y}(x)$ be a
distribution function of $X$ and $Y,$ respectively. Assuming $X$ and $Y$ have
the same support, denote left and right endpoints of the support of $X$ and
$Y$ by $a=\inf\{x:F_{X}(x)>0\}$ and $b=\sup\{x:F_{X}(x)<1\},$ respectively.
We\ allow also the cases $a=-\infty$ and $b=\infty.$ We have
\begin{align*}
F_{Z_{1},Z_{2}}(z_{1},z_{2})  &  =P\{Z_{1}\leq z_{1},Z_{2}\leq z_{2}\}\\
&  =P\{\varphi(Y)\leq z_{1},\psi(X)\leq z_{2}\}\\
&  =P\{Y\leq\varphi^{-1}(z_{1}),X\leq\psi^{-1}(z_{2})\}=P\{X\leq\psi
^{-1}(z_{2}),Y\leq\varphi^{-1}(z_{1})\}\\
&  =F_{X,Y}(\psi^{-1}(z_{2}),\varphi^{-1}(z_{1})),(z_{1},z_{2})\in\lbrack
a,b]^{2}%
\end{align*}
Hereafter, we assume that $\psi^{-1}(\infty)=\infty$ and $\varphi^{-1}%
(\infty)=\infty$ and $\psi^{-1}(-\infty)=-\infty$ and $\varphi^{-1}%
(-\infty)=-\infty.$ $\ $The marginal distributions are
\begin{align*}
F_{Z_{1}}(z_{1})  &  \equiv P\{Z_{1}\leq z_{1}\}=\lim_{z_{2}\rightarrow\infty
}P\{X\leq\psi^{-1}(z_{2}),Y\leq\varphi^{-1}(z_{1})\}=F_{Y}(\varphi^{-1}%
(z_{1}))\\
F_{Z_{2}}(z_{2})  &  \equiv P\{Z_{2}\leq z_{2}\}=\lim_{z_{1}\rightarrow\infty
}P\{X\leq\psi^{-1}(z_{2}),Y\leq\varphi^{-1}(z_{1})\}=F_{X}(\psi^{-1}(z_{2})).
\end{align*}
Now we are interested in the copula of random vector $(Z_{1},Z_{2})$. \ Denote%
\[
F_{X}^{-1}(x)=\inf\{y:F_{X}(y)\geq x\}\text{ and }F_{Y}^{-1}(y)=\inf
\{x:F_{Y}(x)\geq y\}.
\]
Now, \ consider
\begin{equation}
F_{Z_{1},Z_{2}}(z_{1},z_{2})=C_{Z_{1},Z_{2}}(F_{Z_{1}}(z_{1}),F_{Z_{2}}%
(z_{2})), \label{1}%
\end{equation}
where $C_{Z_{1},Z_{2}}(t,s)$ is a connecting copula of $Z_{1}$ and $Z_{2}.$

\bigskip

Using probability integral transformation
\begin{align}
F_{Z_{1}}(z_{1})  &  =t\text{ }\Leftrightarrow F_{Y}(\varphi^{-1}%
(z_{1}))=t\text{ }\Leftrightarrow z_{1}=\varphi(F_{Y}^{-1}(t)),\text{
}F_{Z_{1}}^{-1}(t)=\varphi(F_{Y}^{-1}(t))\nonumber\\
F_{Z_{2}}(z_{2})  &  =s\text{ }\Leftrightarrow F_{X}(\psi^{-1}(z_{2}))=s\text{
}\Leftrightarrow z_{2}=\psi(F_{X}^{-1}(s)),F_{Z_{2}}^{-1}(s)=\psi(F_{X}%
^{-1}(s)) \label{2}%
\end{align}
we obtain from (\ref{1}) and (\ref{2})
\begin{align*}
C_{Z_{1},Z_{2}}(t,s)  &  =F_{Z_{1},Z_{2}}(F_{Z_{1}}^{-1}(t),F_{Z_{2}}%
^{-1}(s))=F_{X,Y}(\psi^{-1}(z_{2}),\varphi^{-1}(z_{1}))\\
&  =F_{X,Y}(\psi^{-1}(\psi(F_{X}^{-1}(s)),\varphi^{-1}(\varphi(F_{Y}%
^{-1}(t))\\
&  =F_{X,Y}(F_{X}^{-1}(s),F_{Y}^{-1}(t))=C(s,t)
\end{align*}
Therefore, we can formulate the following theorem.

\begin{theorem}
\label{Copula Theorem 1}Let the joint distribution function of random
variables $X$ and $Y$ be $F_{X,Y}(x,y)=C(F_{X}(x),F_{Y}(y)),(x,y)\in\lbrack
a,b]^{2},$ where $C(u,v),(u,v)\in I^{2}\equiv\lbrack0,1]^{2}$ is a connected
copula. Consider functions $\varphi(y)=E(X\mid Y=y),y\in%
\mathbb{R}
$ and $\psi(x)=$ $E(Y\mid X=x),x\in%
\mathbb{R}
$ and random variables $Z_{1}\equiv\varphi(Y)=E(X\mid Y)$ and $Z_{2}\equiv
\psi(X)=E(Y\mid X).$ Assume that $\lim_{t\rightarrow\infty}\psi^{-1}%
(t)=\infty$ and $\lim_{s\rightarrow\infty}\varphi^{-1}(s)=\infty.$ Then the
copula of $Z_{1}$ and $Z_{2}$ is $C_{Z_{1},Z_{2}}(t,s)=C(s,t),$ $0\leq
t,s\leq1.$ Therefore, if $X$ and $Y$ are exchangeable then $C_{Z_{1},Z_{2}%
}(t,s)=C(t,s).$
\end{theorem}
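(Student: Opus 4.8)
The plan is to compute the copula of $(Z_1,Z_2)$ directly from Sklar's representation $C_{Z_1,Z_2}(t,s)=F_{Z_1,Z_2}(F_{Z_1}^{-1}(t),F_{Z_2}^{-1}(s))$ and to reduce every object appearing on the right back to the original copula $C$. First I would obtain the joint distribution function of $(Z_1,Z_2)$ by transporting the defining events through the generalized inverses: starting from $F_{Z_1,Z_2}(z_1,z_2)=P\{\varphi(Y)\leq z_1,\psi(X)\leq z_2\}$, the decisive step is the rewriting $P\{\varphi(Y)\leq z_1,\psi(X)\leq z_2\}=P\{Y\leq\varphi^{-1}(z_1),X\leq\psi^{-1}(z_2)\}=F_{X,Y}(\psi^{-1}(z_2),\varphi^{-1}(z_1))$. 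This is exactly where the main obstacle sits: the event identity $\{\varphi(Y)\leq z_1\}=\{Y\leq\varphi^{-1}(z_1)\}$ is legitimate only when $\varphi$ is (non-)decreasing, so one must assume, or verify from the structure of $C$, that the regression functions $\varphi(y)=E(X\mid Y=y)$ and $\psi(x)=E(Y\mid X=x)$ are monotone increasing. The stated hypotheses $\lim_{t\to\infty}\psi^{-1}(t)=\infty$ and $\lim_{s\to\infty}\varphi^{-1}(s)=\infty$, together with the conventions on $\pm\infty$, are precisely what is needed to make the generalized inverses behave correctly at the endpoints of the common support $[a,b]$.

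Next I would extract the two marginals by letting $z_2\to\infty$ and then $z_1\to\infty$ in the joint expression, which yields $F_{Z_1}(z_1)=F_Y(\varphi^{-1}(z_1))$ and $F_{Z_2}(z_2)=F_X(\psi^{-1}(z_2))$. Applying the probability integral transform to each marginal then produces the quantile functions $F_{Z_1}^{-1}(t)=\varphi(F_Y^{-1}(t))$ and $F_{Z_2}^{-1}(s)=\psi(F_X^{-1}(s))$, as recorded in display (\ref{2}). I would note here that continuity of the marginals $F_{Z_1},F_{Z_2}$ (inherited from continuity of $F_X,F_Y$ and of $\varphi,\psi$) is what licenses the use of Sklar's inversion formula in the first place.

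Finally, substituting these quantile functions into the Sklar representation and invoking the cancellation identities $\varphi^{-1}(\varphi(\cdot))=\mathrm{id}$ and $\psi^{-1}(\psi(\cdot))=\mathrm{id}$ (valid under the same strict monotonicity assumed above), I would obtain the chain $C_{Z_1,Z_2}(t,s)=F_{X,Y}(\psi^{-1}(\psi(F_X^{-1}(s))),\varphi^{-1}(\varphi(F_Y^{-1}(t))))=F_{X,Y}(F_X^{-1}(s),F_Y^{-1}(t))=C(s,t)$, which is the asserted coordinate-swapped copula. The exchangeable case then follows at once: if $(X,Y)$ is exchangeable its connecting copula is symmetric, $C(s,t)=C(t,s)$, and therefore $C_{Z_1,Z_2}(t,s)=C(t,s)$. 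I expect the only genuinely delicate point to be the monotonicity of $\varphi$ and $\psi$; everything else is a routine bookkeeping of inverses and limits.
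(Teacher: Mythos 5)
Your proposal follows essentially the same route as the paper: compute $F_{Z_1,Z_2}$ by passing the events through the generalized inverses, extract the marginals by letting $z_1,z_2\to\infty$, invert them via the probability integral transform, and substitute into Sklar's formula to get $C_{Z_1,Z_2}(t,s)=C(s,t)$. The one place you go beyond the paper is in explicitly flagging that the event identity $\{\varphi(Y)\leq z_1\}=\{Y\leq\varphi^{-1}(z_1)\}$ and the cancellations $\varphi^{-1}\circ\varphi=\mathrm{id}$, $\psi^{-1}\circ\psi=\mathrm{id}$ require monotonicity of the regression functions --- an assumption the paper uses tacitly but never states --- which is a worthwhile observation rather than a deviation in method.
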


\begin{example}
(\label{Normal})Let ($X;Y)$ be a bivariate normal random vector with joint
pdf
\begin{align*}
f(x,y)  &  =\frac{1}{2\pi\sigma_{1}\sigma_{2}\sqrt{1-\rho^{2}}}\exp\{-\frac
{1}{2(1-\rho^{2})}((\frac{x-\mu_{1}}{\sigma_{1}})^{2}\\
&  -2\rho(\frac{x-\mu_{1}}{\sigma_{1}})(\frac{x-\mu_{2}}{\sigma2}%
)+(\frac{y-\mu_{2}}{\sigma_{2}})^{2}\}.
\end{align*}
Then
\begin{align}
\psi(x)  &  =E(Y\mid X=x)=\mu_{2}+\rho\frac{\sigma_{2}}{\sigma_{1}}(x-\mu
_{1})\label{Cov1}\\
\varphi(y)  &  =E(X\mid Y=y)=\mu_{1}+\rho\frac{\sigma_{1}}{\sigma2}(y-\mu_{2})
\label{Cov2}%
\end{align}
and
\begin{align}
\psi^{-1}(t)  &  =\frac{t-\mu_{2}}{\rho\sigma_{2}}\sigma_{1}+\mu
_{1}\label{Cov3}\\
\varphi^{-1}(s)  &  =\frac{s-\mu_{1}}{\rho\sigma_{1}}\sigma_{2}+\mu_{2}
\label{Cov4}%
\end{align}
and
\begin{align*}
\lim_{t\rightarrow\infty}\psi^{-1}(t)  &  =\infty\\
\lim_{s\rightarrow\infty}\varphi^{-1}(s)  &  =\infty.
\end{align*}
Therefore, $C_{1}(x,y)=C(y,x),$ where $C$ is a copula of $(X$,$Y)$ and $C_{1}
$ is a copula of $(\psi(Y),\varphi(X))=(E(X\mid Y),E(Y\mid X)).$
\end{example}

The following well-known property of the conditional expectation state that
given $Y$ the conditional expectation of $XY$ is \
\begin{align*}
E(XY)  &  =E(E(XY\mid Y))=E(YE(X\mid Y))\\
E(XY)  &  =E(E(XY\mid X))=E(XE(Y\mid X))
\end{align*}

\begin{proposition}
\label{Covariance Proposition 1}%
\[
Cov(E(X\mid Y),Y)=Cov(E(Y\mid X),X)=Cov(X,Y).
\]

\end{proposition}

\begin{proof}
Since
\[
E(XY)=E[E(XY\mid Y)]=E[YE(X\mid Y)]
\]
and%
\[
E(Y)E[E(X\mid Y)]=E(Y)E(X)
\]
then,%

\begin{align*}
Cov(X,Y)  &  =E(XY)-E(X)E(Y)=E(YE(X\mid Y))-E(Y)E(E(X\mid Y))\\
&  =Cov(Y,E(X\mid Y))=Cov(E(X\mid Y),Y).
\end{align*}

\end{proof}

\bigskip

\begin{remark}
It can be observed that $Cov(E(X\mid Y),E(Y\mid X))$ may not be equal to
$Cov(X,Y).$
\end{remark}

Indeed,
\begin{align*}
&  E\psi(Y)\varphi(X)-E\psi(Y)E\varphi(X)\\
&  =E\psi(Y)\varphi(X)-E[E(X\mid Y)]E[E(Y\mid X)]\\
&  =E\psi(Y)\varphi(X)-E(X)E(Y)\\
&  =E[E(X\mid Y)(E(X\mid Y)]-E(X)E(Y).
\end{align*}
Let for example $\psi(Y)=aY+b,$ $\varphi(X)=cX+d,$ where $a,b,c,d>0$ (see for
example (\ref{Cov1}) and (\ref{Cov2})). Then $E\psi(Y)\varphi
(X)=acE(XY)+adEY+bcEX+bd.$ Therefore, $E\psi(Y)\varphi
(X)=acE(XY)+adEY+bcEX+bd=EXY$ only if $a=1,c=1,b=0,d=0.$ For (\ref{Cov1}) and
(\ref{Cov2}) this means that it must be $\mu_{1}=\mu_{2}=0,\sigma_{1}%
=\sigma_{2}=1,\rho=1.$

\section{Sequences of predicted random variables}

Let $X_{1},X_{2},...,X_{n},...$ be a sequence of dependent random variables.
$\ $Let $\ Y_{1}=X_{1},Y_{2}=E(X_{2}\mid X_{1}),...,Y_{n}=E(X_{n}\mid
X_{n-1}),...$ It is clear that $EY_{i}=E(E(X_{i}\mid X_{i-1}))=EX_{i}%
,i=12,...$ Since $E(Y_{1}Y_{2})=E(X_{1}E(X_{2}\mid X_{1}))=E(E(X_{1}X_{2}\mid
X_{1}))=EX_{1}X_{2},$ then $Cov(Y_{1},Y_{2})=Cov(X_{1},X_{2}).$ \ Furthermore,
denoting by $\psi_{i}(x)=E(X_{i}\mid X_{i-1}),$ $i=2,3,...,$ we have

\bigskip It is well known that the best predictor for $X_{n+1}$ expressed as a
function of $X_{1},X_{2},...,X_{n}$ is \ $E(X_{n+1}\mid X_{1},X_{2}%
,...,X_{n})=\Psi(X_{1},X_{2},...,X_{n}),$ i.e.
\[
\underset{g}{min}E(X_{n+1}-g(X_{1},X_{2},...,X_{n}))^{2}=E(X_{n+1}%
-E(X_{n+1}\mid X_{1},X_{2},...,X_{n}))^{2}%
\]

\begin{theorem}
\bigskip\label{Theorem 3AA} Let $X,Y$ and $Z$ be any random variables defined
on probability space $\left\{  \Omega,\digamma,P\right\}  .$ Then
\begin{equation}
E[X-E(X\mid Y,Z)]^{2}\leq\min\left\{  E[X-E(X\mid Y)]^{2},E[X-E(X\mid
Z)]^{2}\right\}  \label{P0}%
\end{equation}

\end{theorem}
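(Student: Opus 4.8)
The plan is to exploit the variational characterization of conditional expectation as the best mean-square predictor, exactly in the spirit of the identities recorded in the Introduction. Recall that $E(X\mid Y,Z)$ is, by definition, the square-integrable function of the pair $(Y,Z)$ that is closest to $X$ in $L^2$; that is,
\[
E[X-E(X\mid Y,Z)]^2=\min_g E[X-g(Y,Z)]^2,
\]
where the minimum runs over all measurable functions $g$ of two variables. This is simply the two-variable analogue of the one-variable identity $\min_h E(X-h(Y))^2=E(X-E(X\mid Y))^2$ already stated above.

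First I would observe that every measurable function of $Y$ alone is trivially also a measurable function of the pair $(Y,Z)$, namely one that happens to ignore its second argument. In particular $E(X\mid Y)$ is an admissible competitor $g(Y,Z)$ in the displayed minimization, so
\[
E[X-E(X\mid Y,Z)]^2=\min_g E[X-g(Y,Z)]^2\leq E[X-E(X\mid Y)]^2.
\]
Repeating the argument with the roles of $Y$ and $Z$ exchanged, and using that $E(X\mid Z)$ is a function of $Z$ and hence of $(Y,Z)$, gives
\[
E[X-E(X\mid Y,Z)]^2\leq E[X-E(X\mid Z)]^2.
\]
Taking the smaller of the two right-hand sides yields (\ref{P0}).

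Probabilistically this is nothing but the monotonicity of the residual mean-square error under enlargement of the conditioning information: since $\sigma(Y)\subseteq\sigma(Y,Z)$ and $\sigma(Z)\subseteq\sigma(Y,Z)$, conditioning on the richer $\sigma$-algebra can only improve the prediction. The only point that needs care is the verification that the coarser predictor $E(X\mid Y)$ genuinely lies in the admissible class for the finer problem; once this inclusion of function classes is granted, the inequality is immediate and requires no computation. If one prefers a quantitative form of the same step, the Pythagorean decomposition
\[
E[X-E(X\mid Y)]^2=E[X-E(X\mid Y,Z)]^2+E[E(X\mid Y,Z)-E(X\mid Y)]^2,
\]
which follows from the tower property $E(X\mid Y)=E(E(X\mid Y,Z)\mid Y)$ together with the vanishing of the resulting cross term, makes the gap explicit; discarding the nonnegative second summand recovers the bound for $Y$, and symmetry disposes of $Z$.
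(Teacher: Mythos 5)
Your proof is correct, but it takes a genuinely different route from the paper's. The paper argues by direct computation: it inserts $E(X\mid Y)$ into $X-E(X\mid Y,Z)=\{X-E(X\mid Y)\}+\{E(X\mid Y)-E(X\mid Y,Z)\}$, expands the square inside a conditional expectation given $(Y,Z)$, and tries to kill the cross term using the $(Y,Z)$-measurability of $E(X\mid Y)-E(X\mid Y,Z)$ and the tower property. Your main argument instead invokes the variational characterization of $E(X\mid Y,Z)$ as the minimizer of $E[X-g(Y,Z)]^2$ and observes that every measurable function of $Y$ alone is an admissible competitor $g(Y,Z)$; this needs no expansion at all, makes the monotonicity of the prediction error in the conditioning information transparent, and immediately handles both halves of the minimum by symmetry. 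Its only cost is that you must take the $L^2$-projection characterization as known (with the standing hypothesis $EX^2<\infty$, which every version of this statement needs and which you should state); the paper's computational route derives the inequality from the tower property alone. Your closing Pythagorean identity, obtained by decomposing $X-E(X\mid Y)$ rather than $X-E(X\mid Y,Z)$ and using $E(X\mid Y)=E\left(E(X\mid Y,Z)\mid Y\right)$ to annihilate the cross term, is in fact the sign-correct form of what the paper is aiming at: in the paper's ordering the conditional cross term equals $-2\{E(X\mid Y)-E(X\mid Y,Z)\}^2$ rather than zero, so the resulting identity should carry a minus sign in front of the squared gap (as written there, with a plus sign, it would give the reverse inequality). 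Your version sidesteps that delicacy entirely, so both your qualitative and your quantitative arguments are cleaner than the one in the text.
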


\begin{proof}
Consider%
\begin{align*}
&  E\left[  (X-E(X\mid Y,Z))^{2}\mid Y,Z\right] \\
&  =E[(X-E(X\mid Y)+E(X\mid Y)-E(X\mid Y,Z))^{2}\mid Y,Z]
\end{align*}%
\begin{align}
&  =E[\{X-E(X\mid Y)\}^{2}\mid Y,Z]\nonumber\\
&  +2E\left[  \{X-E(X\mid Y)\}\{E(X\mid Y)-E(X\mid Y,Z)\}\mid Y,Z\right]
\nonumber\\
&  +E\left[  \{E(X\mid Y)-E(X\mid Y,Z)\}^{2}\mid Y,Z\right] \nonumber\\
&  =E[\{X-E(X\mid Y)\}^{2}\mid Y,Z]\label{P1}\\
&  +2\left[  E(X\mid Y)-E(X\mid Y,Z)\right]  E\left[  \left\{  X-E(X\mid
Y)\right\}  \mid Y,Z\right] \nonumber\\
&  +E\left[  \left\{  E(X\mid Y)-E(X\mid Y,Z)\right\}  ^{2}\mid Y,Z\right]
.\nonumber
\end{align}
In (\ref{P1}) we take into account the fact that $h(Y,Z)\equiv E(X\mid
Y)-E(X\mid Y,Z)$ is $Y,Z$ measurable and behaviors as a constant in
conditional expectation with respect to $Y,Z$ and, therefore
\begin{align*}
&  E\left[  \left\{  X-E(X\mid Y)\right\}  \left\{  E(X\mid Y)-E(X\mid
Y,Z)\right\}  \mid Y,Z\right] \\
&  =E\left[  \{X-E(X\mid Y)\}h(Y,Z)\mid Y,Z\right] \\
&  =h(Y,Z)E\left[  \left\{  X-E(X\mid Y)\right\}  h(Y,Z)\mid Y,Z\right] \\
&  =\left[  E(X\mid Y)-E(X\mid Y,Z)\right]  E\left[  X-E(X\mid Y)\}\mid
Y,Z\right]  .
\end{align*}
Since
\begin{align}
E(E[\{X-E(X  &  \mid Y)\}\mid Y,Z]\nonumber\\
&  =E(E(X\mid Y,Z))-E(E(X\mid Y)\mid Y,Z)\nonumber\\
&  =EX-E(E(X\mid Y))=EX-EX=0. \label{P2}%
\end{align}
Therefore,
\begin{align}
E[X-E(X  &  \mid Y,Z)]^{2}\nonumber\\
&  =E[\{X-E(X\mid Y)\}^{2}\mid Y,Z]\nonumber\\
+E[\{E(X  &  \mid Y)-E(X\mid Y,Z)\}^{2}\mid Y,Z] \label{pp2}%
\end{align}
Applying the operatior $E$ to both sides of (\ref{pp2}) we obtain%
\begin{align*}
E[X-E(X  &  \mid Y,Z)]^{2}\\
&  =E[\{X-E(X\mid Y)\}^{2}]\\
+[E(X  &  \mid Y)-E(X\mid Y,Z)]^{2}%
\end{align*}
which imply
\begin{align*}
E[X-E(X  &  \mid Y,Z)]^{2}\\
&  \leq E[\{X-E(X\mid Y)\}^{2}].
\end{align*}

\end{proof}

\bigskip

\begin{corollary}
\bigskip\label{Corollay 1}For any $n\geq2,$ and the sequence of random
variables $X_{1},X_{2},...,X_{n},...$ it is true that
\begin{align*}
E\left[  X_{n+1}-E(X_{n+1}\mid X_{i_{1}},X_{i_{2}},...,X_{i_{l}})\right]
^{2}  &  \leq E\left[  X_{n+1}-E(X_{n+1}\mid X_{i_{1}},X_{i_{2}},...,X_{i_{k}%
})\right]  ^{2},\\
1  &  \leq i_{1}<i_{2}<...<i_{k}<i_{l}\leq n,1\leq k<l\leq n.
\end{align*}
For example,
\begin{align*}
E\left[  X_{n+1}-E(X_{n+1}\mid X_{1},X_{2},...,X_{n})\right]  ^{2}  &  \leq
E\left[  X_{n+1}-E(X_{n+1}\mid X_{1},X_{2},...,X_{n-1})\right]  ^{2}\\
E\left[  X_{n+1}-E(X_{n+1}\mid X_{1},X_{2},...,X_{n})\right]  ^{2}  &  \leq
E\left[  X_{n+1}-E(X_{n+1}\mid X_{1},X_{2},...,X_{n-2})\right]  ^{2}\\
E\left[  X_{n+1}-E(X_{n+1}\mid X_{1},X_{2},...,X_{n})\right]  ^{2}  &  \leq
E\left[  X_{n+1}-E(X_{n+1}\mid X_{2},X_{3},...,X_{n})\right]  ^{2}\\
E\left[  X_{n+1}-E(X_{n+1}\mid X_{2},X_{3},X_{4}\right]  ^{2}  &  \leq
E\left[  X_{n+1}-E(X_{n+1}\mid X_{2},X_{3})\right]  ^{2}%
\end{align*}
etc.
\end{corollary}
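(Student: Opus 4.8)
The plan is to obtain the Corollary as an immediate consequence of Theorem \ref{Theorem 3AA}, read as the assertion that \emph{enlarging the conditioning information cannot increase the mean squared prediction error}. The only point that needs attention is that Theorem \ref{Theorem 3AA}, although written for scalar $Y$ and $Z$, continues to hold when $Y$ and $Z$ are replaced by random vectors, since its proof refers only to the generated $\sigma$-algebras and not to the dimension of the conditioning variables. Once this is in hand, the Corollary becomes a single application of the theorem to suitably grouped variables.

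First I would fix two nested conditioning sets $S\subseteq T\subseteq\{1,2,\dots,n\}$ (in the notation of the statement, $S=\{i_1,\dots,i_k\}$ and $T=\{i_1,\dots,i_l\}$ with $k<l$, so that $S\subset T$), and define the random vectors
\[
Y=(X_{j})_{j\in S},\qquad Z=(X_{j})_{j\in T\setminus S}.
\]
Then $\sigma(Y,Z)=\sigma(X_{j}:j\in T)$, so that $E(X_{n+1}\mid Y,Z)=E(X_{n+1}\mid X_{i_1},\dots,X_{i_l})$ and $E(X_{n+1}\mid Y)=E(X_{n+1}\mid X_{i_1},\dots,X_{i_k})$. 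Applying the vector form of Theorem \ref{Theorem 3AA} with $X=X_{n+1}$ yields exactly
\[
E\left[X_{n+1}-E(X_{n+1}\mid X_{i_1},\dots,X_{i_l})\right]^{2}\leq E\left[X_{n+1}-E(X_{n+1}\mid X_{i_1},\dots,X_{i_k})\right]^{2}.
\]
Each of the displayed special cases then follows by an appropriate choice of $S$ and $T$; for instance the third one takes $S=\{2,\dots,n\}$ and $T=\{1,\dots,n\}$, so that $Z=X_{1}$, while the last takes $S=\{2,3\}$ and $T=\{2,3,4\}$.

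The main obstacle, and the only step requiring genuine care, is justifying that Theorem \ref{Theorem 3AA} remains valid with $Y$ and $Z$ vector-valued. Its proof rests solely on the tower property of conditional expectation and on the fact that $E(X\mid Y)-E(X\mid Y,Z)$ is measurable with respect to the conditioning information; both of these are statements about the conditioning $\sigma$-algebras $\sigma(Y)\subset\sigma(Y,Z)$ and are insensitive to whether $Y$ and $Z$ are scalars or vectors. I would therefore simply record that the argument of Theorem \ref{Theorem 3AA} goes through verbatim in this setting. Alternatively, to avoid appealing to a vector version at all, one may argue by induction, adjoining the variables of $T\setminus S$ one at a time and applying the scalar Theorem \ref{Theorem 3AA} at each step with the previously conditioned variables absorbed into its first argument; this expresses the Corollary as $|T\setminus S|$ successive applications of the theorem and requires no new computation.
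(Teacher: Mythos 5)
Your proposal is correct and matches the paper's (implicit) derivation: the corollary is stated without proof as an immediate consequence of Theorem \ref{Theorem 3AA}, applied with $Y=(X_{j})_{j\in S}$ and $Z=(X_{j})_{j\in T\setminus S}$ for nested index sets $S\subset T$, exactly as you describe. Your explicit remark that the theorem's argument depends only on the inclusion $\sigma(Y)\subset\sigma(Y,Z)$ and not on the dimension of the conditioning variables (or, alternatively, your iterative adjoining of one variable at a time) supplies a justification the paper leaves tacit.
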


\begin{example}
\bigskip\textbf{(Martingale)} The sequence $X_{1},X_{2},...,X_{n},...$ is
called a martingale if
\[
E(X_{n+1}\mid X_{1},X_{2},...,X_{n})=X_{n}.
\]
It follows from the corollary that if $X_{1},X_{2},...,X_{n},...$ is a
martingale then
\begin{align*}
&  E[X_{n+1}-X_{n}]^{2}\\
&  =E\left[  X_{n+1}-E(X_{n+1}\mid X_{1},X_{2},...,X_{n})\right]  ^{2}\\
&  \leq E\left[  X_{n+1}-E(X_{n+1}\mid X_{i_{1}},X_{i_{2}},...,X_{i_{k}%
})\right]  ^{2}\\
1  &  \leq i_{1}<i_{2}<...<i_{k}\leq n,1\leq k\leq n.
\end{align*}

\end{example}

\begin{example}
\textbf{(Markov chain)} Let $X_{1},X_{2},...,X_{n},...$ be a Markov chain,
i.e. for an interval $A$ of the real line it is true that
\[
P\{X_{n}\in A\mid X_{i_{1}},X_{i_{2}},...,X_{i_{k}}\}=P\{X_{n}\in A\mid
X_{i_{k}}\}
\]%
\begin{align*}
E\left[  X_{n}-E(X_{n}\mid X_{i_{l}})\right]  ^{2}  &  =E\left[  X_{n}%
-E(X_{n}\mid X_{i_{1}},X_{i_{2}},...,X_{i_{l}})\right]  ^{2}\\
&  \leq E\left[  X_{n}-E(X_{n}\mid X_{i_{1}},X_{i_{2}},...,X_{i_{k}})\right]
^{2}\\
&  =E\left[  X_{n}-E(X_{n+1}\mid X_{i_{k}})\right]  ^{2},\\
1  &  \leq i_{1}<i_{2}<...<i_{k}<i_{l}\leq n,1\leq k<l\leq n.
\end{align*}
Therefore,,
\begin{align*}
E\left[  X_{n+1}-E(X_{n+1}\mid X_{l})\right]  ^{2}  &  \leq E\left[
X_{n+1}-E(X_{n+1}\mid X_{k})\right]  ^{2},\\
1  &  \leq k<l\leq n.
\end{align*}
A good illustration of this fact can be given with order statistics. Let
$X_{1:n}\leq X_{2:n}\leq\cdots\leq X_{n:n}$ be the order statistics of
$X_{1},X_{2},...,X_{n}.$ Then for $1\leq k<l\leq n$ one can write
\begin{equation}
E\left[  X_{n:n}-E(X_{n:n}\mid X_{l:n})\right]  ^{2}\leq E\left[
X_{n:n}-E(X_{n:n}\mid X_{k:n})\right]  ^{2}, \label{e2}%
\end{equation}
i.e. $E(X_{n:n}\mid X_{l:n})$ \ predicts $X_{n:n}$ better than $E(X_{n:n}\mid
X_{k:n}).$
\end{example}

\begin{example}
\textbf{(Order statistics) }Let $X_{1},X_{2},...,X_{n}$ be iid random
variables and $X_{1:n}\leq X_{2:n}\leq\cdots\leq X_{n:n}$ be the order
statistics. It is well known that the order statistics form a Markov chain.
\ Therefore, $E(X_{n:n}\mid X_{1:n,}...,X_{n-1:n},X_{n:n})=E(X_{n:n}\mid
X_{n-1:n}).$ \ By Markov property,
\begin{align*}
&  E\left[  X_{n:n}-E(X_{n:n}\mid X_{1:n},X_{2:n},...,X_{n-1:n})\right]
^{2}\\
&  =E\left[  X_{n:n}-E(X_{n:n}\mid X_{n-1:n})\right]  ^{2}.
\end{align*}
from the corollary and (\ref{e2}) we have
\[
E[X_{n:n}-E(X_{n:n}\mid X_{n-1:n})]^{2}\leq E[X_{n:n}-E(X_{n:n}\mid
X_{n-2:n})]^{2}.
\]
It means that $E(X_{n:n}\mid X_{n-1:n})$ is better estimation for $X_{n:n}$
than $E(X_{n:n}\mid X_{n-2:n}).$ Let us compute the functions $g_{1}%
(x)=E(X_{n:n}\mid X_{n-1:n}=x)$ and $g_{2}(x)=E(X_{n:n}\mid X_{n-2:n}=x).$
From the joint distribution of $X_{r:n}$ and $X_{s:n},$ $r<s,$ we can easily
write the conditional pdf's of $X_{n.n}\mid X_{n-1:n}$ and $X_{n.n}\mid
X_{n-2:n}$ as
\begin{align*}
f_{n\mid n-1}\left(  z\mid x\right)   &  =\frac{f(z)}{1-F(x)},x<z\\
f_{n\mid n-2}\left(  z\mid x\right)   &  =\frac{2(F(z)-F(x))}{(1-F(x))^{2}%
}f(z),x<z,
\end{align*}
respectively. Then for a uniform(0,1) distribution, we can write
\begin{align*}
g_{1}(x)  &  =E(X_{n:n}\mid X_{n-1:n}=x)=\int\limits_{x}^{1}f_{n\mid
n-1}(z\mid x)dz\\
&  =\frac{1+x}{2},0\leq x\leq1
\end{align*}
and%
\begin{align*}
g_{2}(x)  &  =E(X_{n:n}\mid X_{n-2:n}=x)=2\int\limits_{x}^{1}f_{n\mid
n-2}(z\mid x)dz\\
&  =\frac{x+2}{3},0\leq x\leq1
\end{align*}
and it is clear that
\[
g_{1}(x)<g_{2}(x),0\leq x\leq1
\]
because
\[
\frac{x+2}{3}=\frac{1+x}{2}+\frac{1-x}{6},0\leq x\leq1.
\]
This means that $\ g_{1}(X_{n-1:n})=E(X_{n:n}\mid X_{n-1:n})>$ $\ g_{2}%
(X_{n-2:n})=E(X_{n:n}\mid X_{n-2:n}),$ hence $E(X_{n:n}\mid X_{n-1:n})$ is
better than $E(X_{n:n}\mid X_{n-2:n})$ as a predictor of $X_{n:n}.$
\bigskip\ For an exponential distribution $F(x)=1-\exp(-x),x\geq0$ it can be
easily verify that
\begin{align*}
g_{1}(x)  &  =E(X_{n:n}\mid X_{n-1:n}=x)\\
&  =\frac{1}{1-F(x)}\int\limits_{x}^{\infty}zf_{n\mid n-1}(z\mid x)dz\\
&  =\frac{1}{1-F(x)}\int\limits_{x}^{\infty}zf(z)dz\\
&  =e^{x}\int\limits_{x}^{\infty}ze^{-z}dz=e^{x}e^{-x}(x+1)=x+1
\end{align*}
i.e. $g_{1}(x)>g_{2}(x),x\geq0$ and again $E(X_{n:n}\mid X_{n-1:n})$ is better
than $E(X_{n:n}\mid X_{n-2:n})$ as a predictor of $X_{n:n}.$
\end{example}

\begin{example}
(Record Values) Let $X_{1},X_{2},...,X_{n},...$ be a sequence of independent
identically distributed (i.i.d.) r.v.'s with continuous d.f. $F$ ;
$X_{1:n}\leq X_{2:n}\leq...\leq X_{n:n}$ be the order statistics of
$X_{1},X_{2},...,X_{n}$. The random variable $X_{K}$ is called a (upper)
record value of the sequence $\left\{  X_{n},n\geq1\right\}  $ if $X_{K}%
>\max\left\{  X_{1},X_{2},...,X_{K-1}\right\}  .$ By convention $X_{1}$ is
record value. \ Denote by $\left\{  U(n),n>1\right\}  $ \ \ the sequence of
record times:
\[
U(n)=\min\left\{  j:j>U(n-1),X_{j}>\;X_{U(n-1)}\right\}  ,n>1\text{ \ \ \ with
}U(1)=1.
\]
$X_{U(n)}$ is called $n$ th upper record value. Developments on records have
been reviewed by many authors including Nevzorov (1988), Nagaraja (1988),
Arnold and Balakrishnan (1989), Arnold, Balakrishnan, Nagaraja (1992),
Ahsanullah (1995). The properties of records values of iid random variables
have been extensively studied in the literature. Many properties of records
can be expressed in terms of the functions $R\left(  x\right)  =-\log\bar
{F}\left(  x\right)  $ where $\bar{F}\left(  x\right)  =1-F\left(  x\right)  $
and $0<\bar{F}\left(  x\right)  <1$. It is well known that, the sequence of
record values $X_{U(1)},X_{U(2)},...,X_{U(n)},...$ form a Markov chain. \ From
the corollary and (\ref{e2}) we have
\begin{align}
&  E\left[  X_{U(n)}-E(X_{U(n)}\mid X_{U(1)},X_{U(2)},...,X_{U(n-1)})\right]
^{2}\nonumber\\
&  \leq E\left[  X_{U(n)}-E(X_{U(n)}\mid X_{U(1)},X_{U(2)},...,X_{U(n-2)}%
)\right]  ^{2} \label{R1}%
\end{align}
By Markov property
\begin{align}
E\left(  X_{U(n)}\mid X_{U(1)},X_{U(2)},...,X_{U(n-1)}\right)   &  =E\left(
X_{U(n)}\mid X_{U(n-1)}\right) \nonumber\\
E\left(  X_{U(n)}\mid X_{U(1)},X_{U(2)},...,X_{U(n-2)}\right)   &  =E\left(
X_{U(n)}\mid X_{U(n-2)}\right)  . \label{R2}%
\end{align}
Then from (\ref{R1}) and (\ref{R2}) for any $n>2$ we have
\begin{equation}
E\left[  X_{U(n)}-E(X_{U(n)}\mid X_{U(n-1)})\right]  ^{2}\leq E\left[
X_{U(n)}-E(X_{U(n)}\mid X_{U(n-2)})\right]  ^{2}, \label{R3}%
\end{equation}
i.e. $E(X_{U(n)}\mid X_{U(n-1)})$ is better than $E(X_{U(n)}\mid X_{U(n-2)})$
as a predictor of $X_{U(n)}.$ It is clear that, (\ref{R3}) can be extended as
\begin{align}
E\left[  X_{U(n)}-E(X_{U(n)}\mid X_{U(l)})\right]  ^{2}  &  \leq E\left[
X_{U(n)}-E(X_{U(n)}\mid X_{U(k)})\right]  ^{2},\label{R4}\\
2  &  <k<l<n.\nonumber
\end{align}

\end{example}

\bigskip It is possible to extend the list of examples to the areas where the
prediction of random variables with conditional expectations is the subject.

\begin{conclusion}
This paper investigates the new properties of conditional expectation with
respect to a sigma-algebra generated by other random variables. The
conditional expectations of the random variable with respect to a
sigma-algebra generated by the random variable is its best predictor in the
sense of least square distance. Some important inequalities concerning the
predictions of random variables are proved. These inequalities can find
important applications in many areas such as financial mathematics, actuarial
sciences, and reliability engineering. An application of the main inequality
having interesting consequences in per-share stock is presented. Considering
conditional expectations as random variables, we study also the dependence
properties of and copulas of these random variables. Some examples with
ordered random variables and martingales are provided.
\end{conclusion}

\bigskip

\bigskip

\bigskip

\end{document}